\newcommand{\no}[1]{#1}
\renewcommand{\no}[1]{}
\renewcommand{\Delta}{\upDelta}}
\date{\today}
\newtheorem{theorem}{Theorem}[section]
\newtheorem{lemma}{Lemma}[section]
\newtheorem{corollary}{Corollary}[section]
\newtheorem{remark}{Remark}[section]
\numberwithin{equation}{section}
\title[Coefficient identification in terms of the resolvent]{Lipschitz dependence of the coefficients on the resolvent and greedy approximation for scalar elliptic problems}
\thanks{
The second author was partially supported
by the ERC Advanced Grant DYCON (Dynamic Control), the ANR (France) Project ICON, 
Grants FA9550-14-1-0214 of the EOARD-AFOSR, FA9550-15-1-0027 of AFOSR, and
 MTM2014-52347 
 of the MINECO (Spain).
 }
\author[Mourad Choulli]{Mourad Choulli\dag}
\address{\dag IECL, UMR CNRS 7502, Universit\'e de Lorraine, Boulevard des Aiguillettes BP 70239 54506 Vandoeuvre Les Nancy cedex- Ile du Saulcy - 57 045 Metz Cedex 01 France}
\email{mourad.choulli@univ-lorraine.fr}
\author[Enrique Zuazua]{Enrique Zuazua\ddag}
\address{\ddag 
1. DeustoTech - Fundación Deusto, Avda Universidades, 24, 48007, Bilbao - Basque Country - Spain
\vskip.1cm
2. Departamento de Matemáticas, Universidad Autónoma de Madrid, 28049 Madrid - Spain
\vskip.1cm
3. Facultad Ingeniería, Universidad de Deusto, Avda. Universidades, 24, 48007, - Basque Country - Spain
\vskip.1cm
4. Sorbonne Universit\'es, UPMC Univ Paris 06, CNRS UMR 7598, Laboratoire Jacques-Louis Lions, F-75005, Paris, France}
\email{enrique.zuazua@uam.es}
\date{\today}
\begin{document}

\begin{abstract}
We analyze the inverse problem of identifying the  diffusivity coefficient of a scalar elliptic equation as a function of the resolvent operator. We prove that, within the class of measurable coefficients, bounded above and below by positive constants, the resolvent determines the diffusivity in an unique manner. Furthermore we prove that the inverse mapping from resolvent to the coefficient is Lipschitz in suitable topologies.  This result plays a key role when applying greedy algorithms to the approximation of parameter-dependent elliptic problems in an uniform and robust manner, independent of the given source terms. In one space dimension the results can be improved using the  explicit expression of solutions, which allows to link distances between one resolvent and a linear combination of finitely many others and the corresponding distances on coefficients. These results are also extended to multi-dimensional  elliptic equations with variable density coefficients. We also point out towards some possible extensions and open problems.

\medskip
\noindent
{\sc R\'esum\'e.} Nous examinons le probl\`eme inverse d'identifier le coefficient de diffusion comme fonction de la r\'esolvante pour des \'equations elliptiques scalaires. Nous \'etablissons, pour des topologies appropri\'ees,  un r\'esultat de stabilit\'e Lipschitz pour une classe de coefficients de diffusion mesurables, minor\'es et major\'es par des constantes positives fix\'ees a priori. Ce r\'esultat intervient de mani\`ere essentielle dans le d\'eveloppement d'algorithmes  {\it greedy} pour l'approximation d'une famille param\'etr\'ee de probl\`emes elliptiques de mani\`ere robuste et uniforme par rapport au terme source. Nous traitons s\'epar\'ement le cas de la dimension un pour lequel nous disposons de formules explicites de repr\'esentation des solutions permettant de comparer la distance entre une r\'esolvante et une combinaison lin\'eaire d'un nombre fini d'autres et des  coefficients correspondants, et un d\'evelopemment complet de l'approche {\it greedy}. Nous \'etendons ces r\'esultats  au probl\`eme de l'identification de la densit\'e \`a partir de l'op\'erateur r\'esolvant correspondant. Nous signalons aussi quelques probl\`emes ouverts, en particulier dans le cas multi-dimensionnel.

\medskip
\noindent
{\bf Mathematics subject classification :} 35J15, 35R30, 47A10.

\smallskip
\noindent
{\bf Key words :} Elliptic equation, diffusivity and density coefficients, inverse problem, resolvent operator, parameter-dependent equations, greedy algorithms.
\end{abstract}

\maketitle


\section{Introduction}
Let $\Omega$ be a  bounded domain of $\mathbb{R}^n$, $n\ge 1$. Fix $0<\sigma _0<\sigma _1$ and consider the class of scalar diffusivity coefficients
\[
\Sigma =\{ \sigma \in L^\infty (\Omega );\; \sigma _0\le \sigma \le  \sigma _1\; \mbox{a.e. in}\; \Omega \}.
\]

\smallskip
In the sequel $H_0^1(\Omega )$ is endowed with the norm
\[
\|w\|_{H_0^1(\Omega )}=\|\nabla w\|_{L^2(\Omega )^n}.
\]

\smallskip
For $\sigma \in \Sigma$, let $A_\sigma : H^1_0(\Omega )\rightarrow H^{-1}(\Omega )$ be the bounded operator given by 
\[
A_\sigma u=-\mbox{div}(\sigma \nabla u).
\]
The inverse or resolvent operator $R_\sigma$ maps continuously $H^{-1}(\Omega)$ into $H^1_0(\Omega)$. 

\smallskip
To be more precise, denote by $\langle \cdot ,\cdot \rangle _{-1,1}$ the duality pairing between $H^{-1}(\Omega )$ and $H_0^1(\Omega )$. For $f\in H^{-1}(\Omega )$, consider the variational problem of finding $w\in H_0^1(\Omega )$ so that
\begin{equation}\label{vp}
\int_\Omega \sigma \nabla w\cdot \nabla v=\langle f ,v\rangle _{-1,1}\;\mbox{for any}\;  v\in H_0^1(\Omega ).
\end{equation}
According to Lax-Milgram's lemma,  \eqref{vp} has a unique solution $u_\sigma \in H_0^1(\Omega )$. Moreover, the energy estimate yields
\begin{equation}\label{LML}
\|u_\sigma \|_{H_0^1(\Omega )}\le \sigma_0^{-1} \|f\|_{H^{-1}(\Omega )}.
\end{equation}

Indeed, using the solution itself $u_\sigma$ as test function we have
$$
\sigma_0 \int_\Omega  |\nabla u_\sigma|^2 dx  \le \int_\Omega \sigma(x) |\nabla u_\sigma|^2 dx = \langle f, u\rangle_{-1, 1} \le ||f||_{H^{-1}(\Omega)} ||\nabla u_\sigma||_{L^2(\Omega)^n}
$$
and consequently
$$
\sigma_0 ||\nabla u_\sigma||_{L^2(\Omega)^n}
 \le ||f||_{H^{-1}(\Omega)}.
  $$

As we have seen, the coefficient $\sigma$ of the elliptic equation determines uniquely the resolvent operator $R_\sigma$. We address the inverse problem consisting on identifying the coefficient $\sigma$ in terms of the resolvent $R_\sigma$. 

The main result of this paper ensures the Lipschitz character of the inverse map in suitable topologies:

\begin{theorem}\label{theorem1}
For any $\sigma ,\widetilde{\sigma}\in \Sigma$,
\begin{equation}\label{Lest}
\sigma_0^2\|R_\sigma -R_{\widetilde{\sigma}}\|_{-1,1}\le \|\sigma -\widetilde{\sigma} \|_{L^\infty (\Omega )}\le \sigma_1^2\|R_\sigma -R_{\widetilde{\sigma}}\|_{-1,1}. 
\end{equation}
Here and henceforth $\|\cdot \|_{-1,1}$ denotes the norm in $\mathscr{B}(H^{-1}(\Omega ),H_0^1(\Omega ))$.
\end{theorem}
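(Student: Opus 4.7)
The two inequalities in \eqref{Lest} have very different flavors. The left inequality is a routine energy estimate. For $f\in H^{-1}(\Omega)$, set $u=R_\sigma f$ and $\tilde{u}=R_{\widetilde{\sigma}}f$, and subtract the two variational problems \eqref{vp} to obtain
\[
\int_\Omega \sigma\,\nabla(u-\tilde{u})\cdot \nabla v \;=\; \int_\Omega (\widetilde{\sigma}-\sigma)\,\nabla \tilde{u}\cdot \nabla v,\qquad v\in H_0^1(\Omega).
\]
Testing with $v=u-\tilde{u}$, using lower ellipticity on the left, Cauchy--Schwarz and $\|\widetilde{\sigma}-\sigma\|_{L^\infty(\Omega)}$ on the right, and then applying the a priori bound \eqref{LML} to $\tilde u$, I obtain $\|u-\tilde{u}\|_{H_0^1(\Omega)}\le \sigma_0^{-2}\|\sigma-\widetilde{\sigma}\|_{L^\infty(\Omega)}\|f\|_{H^{-1}(\Omega)}$, which is the left half of \eqref{Lest}.

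For the right inequality I reverse the viewpoint: rather than starting from an arbitrary $f$, I start from an arbitrary $\phi\in H_0^1(\Omega)$ and take $f=A_\sigma \phi\in H^{-1}(\Omega)$, so that $R_\sigma f=\phi$. Writing the variational equation for $R_{\widetilde{\sigma}} f$, testing it with $\phi$, and noting that $\langle f,\phi\rangle_{-1,1}=\int_\Omega \sigma|\nabla \phi|^2$, I arrive, after adding and subtracting $\int_\Omega \widetilde{\sigma}|\nabla \phi|^2$, at the identity
\[
\int_\Omega (\sigma-\widetilde{\sigma})|\nabla \phi|^2 \;=\; \int_\Omega \widetilde{\sigma}\,\nabla\bigl(R_{\widetilde{\sigma}} f-\phi\bigr)\cdot \nabla \phi.
\]
Bounding the right-hand side by $\sigma_1\|(R_{\widetilde{\sigma}}-R_\sigma)f\|_{H_0^1(\Omega)}\|\phi\|_{H_0^1(\Omega)}\le \sigma_1\|R_\sigma-R_{\widetilde{\sigma}}\|_{-1,1}\|f\|_{H^{-1}(\Omega)}\|\phi\|_{H_0^1(\Omega)}$ and using $\|f\|_{H^{-1}(\Omega)}\le \sigma_1\|\phi\|_{H_0^1(\Omega)}$, I obtain the scale-invariant inequality
\[
\left|\int_\Omega (\sigma-\widetilde{\sigma})|\nabla \phi|^2\right| \;\le\; \sigma_1^2\,\|R_\sigma-R_{\widetilde{\sigma}}\|_{-1,1}\int_\Omega |\nabla \phi|^2,\qquad \phi\in H_0^1(\Omega).
\]

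It remains to pass from this weighted-average bound to a pointwise $L^\infty$ estimate. For any $x_0\in \Omega$ and any $\epsilon>0$ small enough that $\overline{B(x_0,\epsilon)}\subset \Omega$, I plug in the cone function $\phi_\epsilon(x)=(\epsilon-|x-x_0|)_+\in H_0^1(\Omega)$, whose gradient satisfies $|\nabla \phi_\epsilon|^2=\mathbf{1}_{B(x_0,\epsilon)}$ a.e. The previous display then reduces to
\[
\left|\frac{1}{|B(x_0,\epsilon)|}\int_{B(x_0,\epsilon)}(\sigma-\widetilde{\sigma})\,dx\right| \;\le\; \sigma_1^2\,\|R_\sigma-R_{\widetilde{\sigma}}\|_{-1,1}.
\]
Letting $\epsilon\to 0$, the Lebesgue differentiation theorem gives $|\sigma(x_0)-\widetilde{\sigma}(x_0)|\le \sigma_1^2\|R_\sigma-R_{\widetilde{\sigma}}\|_{-1,1}$ at almost every $x_0\in \Omega$, which is the right half of \eqref{Lest}. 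The main difficulty lies in the middle step: discovering the scale-invariant identity that couples $\sigma-\widetilde{\sigma}$ to the nonnegative density $|\nabla \phi|^2$, because the freedom in $\phi$ and the matching homogeneity of both sides are what make the localization through cone functions possible; once this identity is in hand, the Lebesgue differentiation argument is standard.
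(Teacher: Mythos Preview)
Your proof is correct and follows essentially the same route as the paper: the left inequality is the same energy estimate (the paper's Lemma~\ref{lemma1}), and for the right inequality the paper obtains the same key bound $\bigl|\int_\Omega (\sigma-\widetilde{\sigma})|\nabla\phi|^2\bigr|\le \sigma_1^2\|R_\sigma-R_{\widetilde{\sigma}}\|_{-1,1}\|\phi\|_{H_0^1}^2$ via the operator identity $A_\sigma-A_{\widetilde{\sigma}}=A_\sigma(R_{\widetilde{\sigma}}-R_\sigma)A_{\widetilde{\sigma}}$ (which your variational derivation unwinds explicitly), and then inserts the same cone test functions to invoke Lebesgue differentiation (the paper's Lemma~\ref{lemma0.1} and Corollary~\ref{corollary0.1}).
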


As observed by Albert Cohen (\cite{cohen}) this result can be easily extended to the case of continuous matrix valued coefficients. The proof is the same as the one we shall develop but using test functions that are scaled in a more pronounced manner in a distinguished direction. The extension of this result to the general case of measurable matrix valued coefficients seems however more delicate and requires further work. See Remark \ref{remarkcohen} below.

As we shall see below this question and result arise in the context of  parameter-dependent elliptic equations and it is of potential use (but not sufficient) to develop greedy algorithms to build fast and efficient approximation methods.

\smallskip
By inspecting the proof one can see that  Theorem \ref{theorem1}  holds for any domain $\Omega$  for which Poincar\'e's inequality holds.

\smallskip
Let $d_\infty$ be the distance (between diffusivity coefficients)  induced by the $L^\infty$-norm. Inequality \eqref{Lest} in Theorem \ref{theorem1} can rephrased as
\[
\sigma_0^2 d_R\le d_\infty \le \sigma_1^2 d_R\;\; \mbox{on}\; \Sigma ,
\]
where $d_R$ is the metric on $\Sigma$ defined as follows
\[
d_R(\sigma ,\widetilde{\sigma})=\|R_\sigma -R_{\widetilde{\sigma}}\|_{-1,1},\;\; \sigma ,\widetilde{\sigma}\in \Sigma .
\]

\smallskip
The rest of the paper is organized as follows. We prove Theorem \ref{theorem1} in Section 2. In Section 3 we adapt the proof of Theorem \ref{theorem1} in order to establish a Lipschitz stability estimate in the case of the Neumann or Robin boundary conditions. The case of a BVP with non homogenous boundary values  is treated in Section 4. In that case the resolvent is obtained by varying the boundary data. Due to the  elliptic smoothing effect, which prevents the information to propagate completely from the boundary to the interior, in the present case we are only able to prove H\"older stability. We devote Section 5 to the one dimensional case. Taking advantage of the explicit representation formula for the solution of the BVP we establish a Lipschitz stability property and also estimate the distance from a resolvent  to the linear subspace generated by a finite number of them.
In Section 6 we present the motivation of this paper in the context of greedy algorithms for parameter-dependent elliptic equations and we fully develop it in the one-dimensional case, using the results of Section 5. We also added a remark in section 6 in order to explain how the same program can be fully developed, in any dimension, for elliptic equations with variable density coefficients. We close with Section 7 devoted to some open problems, and in particular to the  extension of the greedy approach to the multi-dimensional diffusivity  model.

\section{Proof of Theorem \ref{theorem1}}

The first inequality in \eqref{Lest} is contained in the following elementary lemma. 

\begin{lemma}\label{lemma1}
For any $\sigma ,\widetilde{\sigma}\in L^\infty (\Omega )$ satisfying $\sigma _0\le \sigma ,\widetilde{\sigma}$,
\[
\|R_\sigma -R_{\widetilde{\sigma}}\|_{-1,1}\le  \sigma _0^{-2}\|\sigma -\widetilde{\sigma} \|_{L^\infty (\Omega  )}.
\]
\end{lemma}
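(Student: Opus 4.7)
The plan is to exploit the variational formulations for both coefficients simultaneously, subtract them, and then test against the difference of the two resolvents. More precisely, fix $f \in H^{-1}(\Omega)$ and set $u = R_\sigma f$, $\widetilde{u} = R_{\widetilde{\sigma}} f$. Both satisfy \eqref{vp} with the same right-hand side, so subtracting gives the identity
\[
\int_\Omega \sigma \nabla(u-\widetilde{u}) \cdot \nabla v = \int_\Omega (\widetilde{\sigma}-\sigma)\,\nabla \widetilde{u}\cdot \nabla v\qquad \text{for all } v \in H_0^1(\Omega),
\]
which rewrites the perturbation in $\sigma$ as a source term for the equation governing $u-\widetilde{u}$.

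Next I would choose $v = u-\widetilde{u}$ as test function. On the left one uses the lower bound $\sigma \ge \sigma_0$ together with the choice of norm on $H_0^1(\Omega)$ to get $\sigma_0\,\|u-\widetilde{u}\|_{H_0^1(\Omega)}^2$, while the right is controlled by Cauchy-Schwarz and the $L^\infty$-bound as
\[
\bigl|\,\textstyle\int_\Omega (\widetilde{\sigma}-\sigma)\nabla \widetilde{u}\cdot \nabla(u-\widetilde{u})\,\bigr| \;\le\; \|\sigma-\widetilde{\sigma}\|_{L^\infty(\Omega)}\,\|\widetilde{u}\|_{H_0^1(\Omega)}\,\|u-\widetilde{u}\|_{H_0^1(\Omega)}.
\]
Dividing by $\|u-\widetilde{u}\|_{H_0^1(\Omega)}$ yields a first bound with one power of $\sigma_0^{-1}$.

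To recover the second factor of $\sigma_0^{-1}$ (hence $\sigma_0^{-2}$), I would invoke the energy estimate \eqref{LML} applied to $\widetilde{u} = R_{\widetilde{\sigma}} f$, which gives $\|\widetilde{u}\|_{H_0^1(\Omega)} \le \sigma_0^{-1}\|f\|_{H^{-1}(\Omega)}$. Plugging this in and dividing by $\|f\|_{H^{-1}(\Omega)}$, then taking the supremum over $f$, produces the operator-norm bound $\|R_\sigma - R_{\widetilde{\sigma}}\|_{-1,1} \le \sigma_0^{-2}\|\sigma-\widetilde{\sigma}\|_{L^\infty(\Omega)}$.

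There is no serious obstacle here: the argument is the classical Strang-type stability estimate for Galerkin/variational problems, and the only care required is to keep track of the two appearances of the coercivity constant $\sigma_0$, one from the ellipticity of $A_\sigma$ and one from the a priori bound on $\widetilde{u}$, so as to obtain the sharp exponent $-2$ rather than $-1$ in the final estimate.
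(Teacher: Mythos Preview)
Your proof is correct and follows essentially the same route as the paper: subtract the two variational identities, test with $v=u-\widetilde{u}$, use the coercivity bound $\sigma\ge\sigma_0$ on the left and Cauchy--Schwarz together with the energy estimate \eqref{LML} for $\widetilde{u}$ on the right. The only cosmetic difference is that you explicitly take the supremum over $f$ at the end, whereas the paper leaves that step implicit.
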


\begin{proof}
From \eqref{vp} we have
\[
\int_\Omega \sigma \nabla u_\sigma \cdot \nabla vdx=\int_\Omega \widetilde{\sigma} \nabla u_{\widetilde{\sigma}} \cdot \nabla vdx\;\;\mbox{for any}\;  v\in H_0^1(\Omega ).
\]
Hence
\[
\int_\Omega \sigma \nabla (u_\sigma -u_{\widetilde{\sigma}})\cdot \nabla vdx=\int_\Omega (\widetilde{\sigma}-\sigma ) \nabla u_{\widetilde{\sigma}} \cdot \nabla vdx\;\;\mbox{for any}\;  v\in H_0^1(\Omega ).
\]
The particular choice of $v=u_\sigma -u_{\widetilde{\sigma}}$ in the identity above, thanks to (\ref{LML}),  yields
\begin{align}\label{basicest}
\sigma _0\int_\Omega |\nabla (u_\sigma -u_{\widetilde{\sigma}})|^2 dx\le \int_\Omega \sigma |\nabla (u_\sigma -u_{\widetilde{\sigma}})|^2 dx&\le \|\widetilde{\sigma}-\sigma\|_{L^\infty (\Omega )}\|\nabla  u_{\widetilde{\sigma}}\|_{L^2(\Omega )^n}\|\nabla  (u_\sigma -u_{\widetilde{\sigma}})\|_{L^2(\Omega )^n}
\\
&\le  \sigma _0^{-1}\|\widetilde{\sigma}-\sigma\|_{L^\infty (\Omega )}\|f\|_{H^{-1}(\Omega )}\|\nabla  (u_\sigma -u_{\widetilde{\sigma}})\|_{L^2(\Omega )^n}.\nonumber
\end{align}

From \eqref{basicest} we deduce immediately the expected inequality.
\end{proof}


Next, we establish the key lemma that we will use to prove the second inequality in \eqref{Lest}.

\begin{lemma}\label{lemma0.1}
Let $\gamma \in L^\infty (\Omega )$. For a.e. $x_0\in \Omega$, there exists a sequence $(u_{x_0,\epsilon} )$ in  $H_0^1(\Omega )$ so that $\|u_{x_0,\epsilon} \|_{H_0^1(\Omega )}=1$, for each $\epsilon$, and
\[
\lim_\epsilon\int_\Omega \gamma (x) |\nabla u_{x_0,\epsilon} |^2dx=\gamma (x_0).
\]
\end{lemma}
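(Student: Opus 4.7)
The approach is to build $u_{x_0,\epsilon}$ as a rescaled, concentrated bump centered at $x_0$, and then invoke the Lebesgue differentiation theorem for $\gamma$. This is natural because the statement can be read as saying that $|\nabla u_{x_0,\epsilon}|^2$ should behave, in the limit, like a Dirac mass at $x_0$ of total mass $1$ when tested against $\gamma$.

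Concretely, I would fix once and for all a function $\phi \in C_c^\infty(B(0,1))$ with $\|\nabla \phi\|_{L^2(\mathbb{R}^n)^n} = 1$, and define
\[
u_{x_0,\epsilon}(x) = \epsilon^{1-n/2}\, \phi\!\left(\frac{x-x_0}{\epsilon}\right),
\]
which, for $\epsilon$ sufficiently small (so that $B(x_0,\epsilon) \subset \Omega$), belongs to $C_c^\infty(\Omega) \subset H_0^1(\Omega)$. A direct change of variables $y = (x-x_0)/\epsilon$ gives $\nabla u_{x_0,\epsilon}(x) = \epsilon^{-n/2}(\nabla \phi)((x-x_0)/\epsilon)$, so
\[
\int_\Omega |\nabla u_{x_0,\epsilon}|^2\, dx = \int_{B(0,1)} |\nabla \phi(y)|^2\, dy = 1,
\]
which gives the required normalization.

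For the limit, the same change of variables yields
\[
\int_\Omega \gamma(x)\, |\nabla u_{x_0,\epsilon}|^2\, dx = \int_{B(0,1)} \gamma(x_0+\epsilon y)\, |\nabla \phi(y)|^2\, dy.
\]
Subtracting $\gamma(x_0) = \gamma(x_0)\int_{B(0,1)} |\nabla \phi(y)|^2\, dy$ and using $|\nabla \phi|^2 \in L^\infty$, we can bound the difference by $\|\nabla \phi\|_{L^\infty}^2 \int_{B(0,1)} |\gamma(x_0+\epsilon y)-\gamma(x_0)|\, dy$, which, after pulling the change of variables back, equals a multiple of the average $|B(x_0,\epsilon)|^{-1}\!\int_{B(x_0,\epsilon)} |\gamma(x)-\gamma(x_0)|\,dx$. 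By the Lebesgue differentiation theorem, this average tends to $0$ as $\epsilon \to 0$ at every Lebesgue point of $\gamma$, i.e.\ at a.e.\ $x_0 \in \Omega$, which is precisely the statement.

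There is no serious obstacle here; the only points that require care are (i) ensuring $\operatorname{supp}(u_{x_0,\epsilon}) \subset \Omega$, which only requires $\epsilon < \operatorname{dist}(x_0,\partial \Omega)$, and (ii) recognizing that "a.e.\ $x_0$" in the statement matches exactly the set of Lebesgue points of $\gamma \in L^\infty(\Omega)$. The delicate scaling exponent $\epsilon^{1-n/2}$ is the unique one compatible with the homogeneity of $\int |\nabla u|^2\, dx$, and any other bump profile $\phi$ would work provided we renormalize by $\|\nabla \phi\|_{L^2}$.
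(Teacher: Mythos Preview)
Your argument is correct and follows essentially the same approach as the paper: construct test functions concentrating around $x_0$ so that $|\nabla u_{x_0,\epsilon}|^2$ acts like an approximate identity, then invoke the Lebesgue differentiation theorem. The only cosmetic difference is the choice of profile: the paper uses an explicit piecewise-linear radial function so that $|\nabla u_{x_0,\epsilon}|^2$ is exactly $|B(x_0,\epsilon)|^{-1}\chi_{B(x_0,\epsilon)}$, whereas you use a generic smooth bump and an extra $L^\infty$ bound on $|\nabla\phi|^2$ to reduce to the same average.
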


\begin{proof}
Let $\epsilon >0$ and set
\[
\varphi _\epsilon (r)=0,\; r\le 0,\;\; \varphi _\epsilon (r)=r,\; 0<r<\epsilon ,\;\; \varphi _\epsilon (r)=\epsilon ,\; r\ge \epsilon ,
\]
the continuous function such that $\varphi_\epsilon '=\chi_{(0,\epsilon)}$, where $\chi_{(0,\epsilon)}$ is the characteristic function of the interval $(0,\epsilon )$.

\smallskip
Let $x_0\in \Omega$ and $\epsilon _0$ be sufficiently small is such a away that $B(x_0,\epsilon _0)\subset \Omega$. Define $u_{x_0,\epsilon}$, $0<\epsilon \le \epsilon _0$ by
\[
u_{x_0,\epsilon}(x)=\frac{1}{\sqrt{|B(x_0,\epsilon )|}}\varphi _\epsilon (|x-x_0| ).
\]
It is easy to see that
\[
\nabla u_{x_0,\epsilon} (x)=\chi_{B(x_0, \epsilon)}(x)\frac{x-x_0}{|x-x_0|\sqrt{|B(x_0,\epsilon )}|}
\]
and that $u_{x_0,\epsilon}$ belongs to $H_0^1(\Omega )$.
Whence 
\[
|\nabla u_{x_0,\epsilon}|^2= \frac{1}{|B(x_0,\epsilon )|}\chi _{B(x_0,\epsilon )}.
\]
Therefore $\|u_{x_0,\epsilon}\|_{H_0^1(\Omega )}=1$ and by Lebesgue's differentiation theorem
\[
\int_\Omega \gamma (x)|\nabla u_{x_0,\epsilon}|^2dx =\frac{1}{|B(x_0,\epsilon )|}\int_{B(x_0, \epsilon)}\gamma (x)dx \underset{\epsilon\rightarrow 0}{\longrightarrow} \gamma (x_0) \;\; \mbox{a.e.}\; x_0\in \Omega .
\]
\end{proof}

\begin{corollary}\label{corollary0.1}
Let $\gamma \in L^\infty (\Omega )$ be so that
\begin{equation}\label{0.1.1hyp}
\int_\Omega \pm \gamma |\nabla u|^2dx \le C,\;\; \mbox{for any}\; u\in H_0^1(\Omega ),\; \|u\|_{H_0^1(\Omega )}=1,
\end{equation}
for some constant $C>0$. Then
\begin{equation}\label{0.1.1}
\|\gamma \|_{L^\infty (\Omega )}\le C.
\end{equation}
\end{corollary}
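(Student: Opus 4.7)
The plan is to deduce the corollary as a direct consequence of Lemma \ref{lemma0.1}, applied to both $\gamma$ and $-\gamma$. First I would fix an arbitrary Lebesgue point $x_0 \in \Omega$ of $\gamma$ for which the conclusion of Lemma \ref{lemma0.1} holds; by that lemma such points form a set of full measure in $\Omega$.

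For such an $x_0$, I would invoke Lemma \ref{lemma0.1} to obtain the sequence $(u_{x_0,\epsilon})$ in $H_0^1(\Omega)$ with $\|u_{x_0,\epsilon}\|_{H_0^1(\Omega)} = 1$. Since each $u_{x_0,\epsilon}$ is admissible in the hypothesis \eqref{0.1.1hyp}, we have
\[
\int_\Omega \gamma(x) |\nabla u_{x_0,\epsilon}|^2 dx \le C \quad \text{and} \quad -\int_\Omega \gamma(x) |\nabla u_{x_0,\epsilon}|^2 dx \le C
\]
for every $\epsilon$. Passing to the limit $\epsilon \to 0$ in both inequalities and using the convergence asserted in Lemma \ref{lemma0.1} gives $\gamma(x_0) \le C$ and $-\gamma(x_0) \le C$, that is, $|\gamma(x_0)| \le C$.

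Since this bound holds for almost every $x_0 \in \Omega$, taking the essential supremum yields $\|\gamma\|_{L^\infty(\Omega)} \le C$, as required. There is no serious obstacle here: the entire content of the corollary is packaged in Lemma \ref{lemma0.1}, which manufactures a family of unit-norm test functions whose energy densities concentrate at an arbitrary Lebesgue point; the corollary just records that an $L^\infty$ function controlled pointwise a.e. by $C$ has $L^\infty$-norm bounded by $C$.
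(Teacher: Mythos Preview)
Your proof is correct and follows essentially the same approach as the paper: invoke Lemma~\ref{lemma0.1} to produce, for a.e.\ $x_0\in\Omega$, unit-norm test functions whose Dirichlet energy concentrates at $x_0$, apply the hypothesis \eqref{0.1.1hyp} with both signs, and pass to the limit to conclude $|\gamma(x_0)|\le C$ a.e. The only cosmetic difference is that the paper writes separate sequences $(u_n^\pm)$ for the two signs, whereas you (equivalently) use a single sequence and flip the sign of the integrand.
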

\begin{proof}
In light of Lemma \ref{lemma0.1}, for a.e. $x_0\in \Omega $, there exists a sequence $(u_n^\pm)$ in  $H_0^1(\Omega )$ so that $\|u_n^\pm\|_{H_0^1(\Omega )}=1$, for each $n$, and
\[
\lim_n\int_\Omega \pm \gamma (x) |\nabla u_n^\pm|^2dx=\pm \gamma (x_0).
\]
Therefore, in view of (\ref{0.1.1hyp}), $|\gamma (x_0)|\le C$ a.e. $x_0\in \Omega$, implying \eqref{0.1.1}.
\end{proof}

We are now ready to complete the proof of Theorem \ref{theorem1}. Fix $\sigma ,\widetilde{\sigma}\in \Sigma_0$. Starting from the identity
\[
A_\sigma -A_{\widetilde{\sigma}}= A_\sigma(R_{\widetilde{\sigma}}-R_\sigma )A_{\widetilde{\sigma}},
\]
we get
\begin{equation}\label{0.1}
\| A_\sigma -A_{\widetilde{\sigma}}\|_{1,-1}\le \sigma _1^2 \| R_\sigma -R_{\widetilde{\sigma}}\|_{-1,1}.
\end{equation}
On the other hand
\[
\langle  (A_\sigma -A_{\widetilde{\sigma}})u, v \rangle _{-1,1}= \int_\Omega (\sigma -\widetilde{\sigma})\nabla u \cdot \nabla v dx,\;\; u,v\in H_0^1(\Omega ),
\]
implying 
\[
\int_\Omega (\sigma -\widetilde{\sigma})\nabla u \cdot \nabla v dx\le \| A_\sigma -A_{\widetilde{\sigma}}\|_{1,-1} \|u\|_{H_0^1(\Omega )}\|v \|_{H_0^1(\Omega )},\;\; u,v \in H_0^1(\Omega ).
\]
Combined with \eqref{0.1}, this estimate yields
\begin{equation}\label{0.2}
\int_\Omega \gamma \nabla u \cdot \nabla v dx\le \sigma _1^2 \| R_\sigma -R_{\widetilde{\sigma}}\|_{-1,1}\|u\|_{H_0^1(\Omega )}\|v \|_{H_0^1(\Omega )},\;\; u,v\in H_0^1(\Omega ),
\end{equation}
where we set $\gamma =\sigma -\widetilde{\sigma}$. Hence
\begin{equation}\label{0.3}
\int_\Omega \gamma |\nabla u |^2 dx\le \sigma _1^2 \| R_\sigma -R_{\widetilde{\sigma}}\|_{-1,1},\;\; u\in H_0^1(\Omega ),\; \|u\|_{H_0^1(\Omega )}=1.
\end{equation}
But, by symmetry, \eqref{0.3} holds when $\gamma$ is substituted by $-\gamma=\widetilde{\sigma}- \sigma$. That is we have
\[
\int_\Omega \pm \gamma |\nabla u |^2 dx\le \sigma _1^2 \| R_\sigma -R_{\widetilde{\sigma}}\|_{-1,1},\;\; u\in H_0^1(\Omega ),\; \|u\|_{H_0^1(\Omega )}=1
\]
which, by Corollary \ref{corollary0.1},  yields the second inequality of \eqref{Lest}.

\begin{remark}\label{remarkcohen}
{\rm
Following interesting discussions with A. Cohen \cite{cohen}, here we present possible extensions to the anisotropic case that can be obtained following the method of proof of Theorem \ref{theorem1}.

\smallskip
(i) Consider, to begin with, in dimension two, the case of an anisotropic diagonal conductivity
 \[
 \sigma =\mbox{diag}( \sigma _1,\sigma_2).
 \]
 In the sequel, for simplicity sake, we identify $\sigma$ with $(\sigma _1,\sigma _2)$.
 
 \smallskip
 Fix $0<a _0<a _1$ and let $\Sigma'$ the set of $\sigma =(\sigma_1,\sigma _2)\in C(\overline{\Omega})\oplus C(\overline{\Omega})$ satisfying
 \[
a _0|\xi |^2\le \sigma _1(x)\xi_1^2+\sigma_2(x)\xi_2^2\;\; \mbox{and}\;\; \sigma _1(x),\sigma_2(x)\le a_1 \;\; \mbox{for any}\; x\in \Omega,\; \xi \in \mathbb{R}^2.
 \]
Let $\sigma ,\widetilde{\sigma}\in \Sigma '$.  With similar notations ($\gamma$ denotes the difference of two diffusivity pairs), instead of \eqref{0.3} we have in the present case
 \begin{equation}\label{0.4}
\int_\Omega \left[\gamma_1 (\partial _1 u) ^2 + \gamma_2 (\partial _2 u) ^2\right]dx\le a _1^2 \| R_\sigma -R_{\widetilde{\sigma}}\|_{-1,1},\;\; u\in H_0^1(\Omega ),\; \|u\|_{H_0^1(\Omega )}=1.
\end{equation}

Without loss of generality, we can assume that 
\[
\|\gamma _1\|_{C(\overline{\Omega})}=\max \left(\|\gamma _1\|_{C(\overline{\Omega})},\|\gamma _2\|_{C(\overline{\Omega})}\right)
\]

Fix $0<\delta <1$ and let $x_0\in \Omega$ so that
\[
|\gamma_1(x_0)|=(1-\delta )\|\gamma _1\|_{C(\overline{\Omega})}.
\]
Substituting $\sigma -\widetilde{\sigma}$ by $\widetilde{\sigma}-\sigma$, we can always assume that $\gamma _1(x_0)>0$.

\smallskip

Let $\varphi _\epsilon$ be as in Lemma \ref{lemma0.1}, $D_K(x_0,\epsilon )=\{(x_1,x_2);\; |(K(x_1 -x_{0,1}),x_2-x_{0,2})|\le \epsilon\}$ and consider the test function
\[
\varphi_{x_0,\epsilon}(x_1,x_2)=\frac{1}{\sqrt{D_K(x_0,\epsilon )}} \varphi_\epsilon (|(K(x_1 -x_{0,1}),x_2-x_{0,2})|),
\]
where the scaling parameter $K$ is chosen in such a away that $K\gamma _1(x_0)-|\gamma _2(x_0)|\ge \gamma _1(x_0)$, and $\epsilon$ is sufficiently small so that $\mbox{supp}(\varphi_{x_0,\epsilon}) \Subset \Omega$.

\smallskip
Define $\psi_{x_0,\epsilon} $ by $\psi_{x_0,\epsilon} (x_1,x_2)=\varphi_{x_0,\epsilon}(x_2,x_1)$ and observe that we still have $\mbox{supp}(\psi_{x_0,\epsilon}) \Subset \Omega$ provided that $\epsilon$ is sufficiently small. Then \eqref{0.4} with $u=\varphi_{x_0,\epsilon}$ and $u=\psi_{x_0,\epsilon}$ successively  yields in a straightforward manner
\[
\frac{1}{|D_K(x_0,\epsilon )|}\int_{D_K(x_0,\epsilon )} \left( K\gamma _1+\gamma _2\right)dx \le 2a _1^2 \| R_\sigma -R_{\widetilde{\sigma}}\|_{-1,1}.
\]
Whence
\[
\frac{1}{|D_K(x_0,\epsilon )|}\int_{D_K(x_0,\epsilon )} \gamma _1dx \le 2a _1^2 \| R_\sigma -R_{\widetilde{\sigma}}\|_{-1,1}.
\]
A standard continuity argument leads
\[
(1-\delta )\max \left(\|\gamma _1\|_{C(\overline{\Omega})},\|\gamma _2\|_{C(\overline{\Omega})}\right)=\gamma _1(x_0)\le 2a _1^2 \| R_\sigma -R_{\widetilde{\sigma}}\|_{-1,1}.
\]
Letting $\delta$ tends to zero, we get
\[
\max \left(\|\gamma _1\|_{C(\overline{\Omega})},\|\gamma _2\|_{C(\overline{\Omega})}\right)\le 2a _1^2 \| R_\sigma -R_{\widetilde{\sigma}}\|_{-1,1}.
\]

On the other hand, we can proceed as in the proof of Lemma \ref{lemma1} in order to get 
\[
\frac{1}{2}a_0^2 \| R_\sigma -R_{\widetilde{\sigma}}\|_{-1,1}\le \max \left(\|\gamma _1\|_{C(\overline{\Omega})},\|\gamma _2\|_{C(\overline{\Omega})}\right).
\]
In other words, we established the following two-sided estimate
\[
\frac{1}{2}a_0^2 \| R_\sigma -R_{\widetilde{\sigma}}\|_{-1,1}\le \max \left(\|\gamma _1\|_{C(\overline{\Omega})},\|\gamma _2\|_{C(\overline{\Omega})}\right)\le 2a _1^2 \| R_\sigma -R_{\widetilde{\sigma}}\|_{-1,1}.
\]

(ii) The same arguments applies in the any space dimension for continuous anisotropic diagonal conductivities of the form
 \[
 \sigma =\mbox{diag}(\sigma _1,\ldots \sigma _n).
 \]
 
(iii) The case of general symmetric continuous conductivities $\sigma$ can be handled by an extra diagonalisation argument that can be performed at each point $x_0$ in $\Omega$.

(iv) Handling the more general case of measurable matrix valued diffusivities requires significant extra work (\cite{cohen}).

 }
\end{remark}

\section{Neumann and Robin boundary conditions}

We explain briefly how Theorem \ref{theorem1} can be adapted to both Neumann and Robin BVP's. 

\subsection{The Neumann case}

For $\sigma \in \Sigma$, define  $A_\sigma ^N:H^1(\Omega )\rightarrow (H^1(\Omega ))'$ by
\[
\langle A_\sigma ^N u,v \rangle := \int_\Omega \sigma \nabla u \cdot \nabla v dx+\int_\Omega u v dx  ,\;\; u,v \in H^1(\Omega ),
\]
where $\langle \cdot ,\cdot \rangle$ is the duality pairing between $(H^1(\Omega ))'$ and $H^1(\Omega )$. 

\smallskip
Clearly $A_\sigma ^N$ is bounded and, with $ \underline{\sigma}_1=\max (\sigma_1,1)$,
\[
\|A_\sigma ^Nu\|_{(H^1(\Omega ))'}\le \underline{\sigma}_1\|u\|_{H^1(\Omega )},\;\; u\in H^1(\Omega ).
\]

We claim that $A_\sigma^N$ is invertible. Indeed, if $f\in (H^1(\Omega ))'$,  we get by applying Lax-Milgram's lemma that the variational problem
\begin{equation}\label{4.2}
\int_\Omega \sigma \nabla u_\sigma \cdot \nabla v dx+\int_\Omega u_\sigma v dx =\langle f,v \rangle ,\;\; v \in H^1(\Omega ).
\end{equation}
has a unique solution $u_\sigma\in H^1(\Omega )$.

Taking $v=u_\sigma$ in \eqref{4.2}, we get in a straightforward manner that
\begin{equation}\label{4.3}
\|u_\sigma \|_{H^1(\Omega )} \le \underline{\sigma}_0^{-1}\|f\|_{(H^1(\Omega ))'},\;\; \mbox{with}\; \underline{\sigma}_0=\min (\sigma _0,1).
\end{equation}

As a consequence of \eqref{4.2}, $A_\sigma ^Nu_\sigma =f$. Thus $A_\sigma^N$ has a bounded inverse 
\[
R_\sigma^N :=(A_\sigma^N)^{-1}:(H^1(\Omega ))'\rightarrow H^1(\Omega ).
\]
This operator is nothing but the resolvent of the operator $-\mbox{div}(\sigma \nabla \cdot )+1$ under the Neumann boundary condition. When $\Omega$ and $u$ are sufficiently smooth, this boundary condition can be written as  $
\partial_\nu u  =0$ on $\partial \Omega$, where $\partial _\nu =\nu \cdot \nabla$ with $\nu $ the exterior normal unit normal vector field on $\Gamma$.

\smallskip
As
\[
\langle  (A_\sigma ^N -A_{\widetilde{\sigma}}^N)u, v \rangle = \int_\Omega (\sigma -\widetilde{\sigma})\nabla u \cdot \nabla v dx,\;\; u,v\in H^1(\Omega ),
\]
we can mimic the proof  of Theorem \ref{theorem1} in order to get

\[
\|\sigma -\widetilde{\sigma}\|_{L^\infty (\Omega )}\le \underline{\sigma}_1^2\|R_\sigma ^N-R_{\widetilde{\sigma}}^N\|_{-1,1} .
\]

On the other hand,  we have, similarly to Lemma \ref{lemma1},
\[
\underline{\sigma}_0^2\|R_\sigma ^N-R_{\widetilde{\sigma}}^N\|_{-1,1}\le \|\sigma -\widetilde{\sigma}\|_{L^\infty (\Omega )}.
\]

In other words, we proved
\[
\underline{\sigma}_0^2\|R_\sigma ^N-R_{\widetilde{\sigma}}^N\|_{-1,1}\le \|\sigma -\widetilde{\sigma}\|_{L^\infty (\Omega )}\le\underline{\sigma}_1^2 \|\sigma -\widetilde{\sigma}\|_{L^\infty (\Omega )}.
\]

\subsection{The Robin case}

In the present subsection we assume that $\Omega$ has Lipschitz boundary $\Gamma$.

\smallskip
We examine the case of a BVP with a Robin boundary condition. To this end, pick $\beta \in L^\infty (\Gamma )$ so that $\beta \ge 0$ and $\beta \ge \beta _0$ on a measurable subset $\Gamma _0$ of $\Gamma$ of positive measure, where $\beta_0 >0$ is some constant. Consider the Robin BVP
\begin{equation}\label{eqr}
-\mbox{div}(\sigma \nabla u)=f\; \mbox{in}\; \Omega \;\; \mbox{and}\;\; \sigma \partial _\nu u+\beta u=0\; \mbox{on}\;\Gamma.
\end{equation}

\smallskip
If $\sigma \in \Sigma$, define $A_\sigma ^R:H^1(\Omega )\rightarrow (H^1(\Omega ))'$ by
\[
\langle A_\sigma ^R u,v \rangle := \int_\Omega \sigma \nabla u \cdot \nabla v dx+\int_{\Gamma}\beta u v dS(x)  ,\;\; u,v \in H^1(\Omega ).
\]
In the sequel we equip $H^1(\Omega )$ with the norm
\begin{equation}\label{r2}
\|u\|_{H^1(\Omega)}=\left(\|\nabla u\|^2_{L^2(\Omega )^n}+\|u\|_{L^2(\Gamma _0)}^2\right)^{1/2}.
\end{equation}

It is not hard to check that $A_\sigma ^R$ is bounded and
\[
\|A_\sigma ^Ru\|_{(H^1(\Omega ))'}\le \underline{\sigma}_1\|u\|_{H^1(\Omega )},\;\; u\in H^1(\Omega ),\;\; \mbox{with}\;  \underline{\sigma}_1=\max (\sigma_1,\kappa \|\beta \|_{L^\infty (\Gamma)}),
\]
where $\kappa$ is the norm of the trace operator $u\in H^1(\Omega )\rightarrow u_{|\Gamma}\in L^2(\Gamma )$ when $H^1(\Omega )$ is endowed with the norm \eqref{r2}.

Similarly to the Neumann case, we show that $A_\sigma ^R$ is invertible and we calculate its inverse. To do that we consider the bilinear form
\[
a(u,v)=\int_\Omega \sigma \nabla u\cdot \nabla vdx+\int_{\Gamma}\beta uvdS(x),\;\; u,v\in H^1(\Omega ).
\]
One can check that $u\rightarrow a(u,u)$ defines a norm on $H^1(\Omega )$ equivalent to the usual one on $H^1(\Omega )$. Let $f\in (H^1(\Omega ))'$. Then according to Riesz's representation theorem, there exists a unique $u_\sigma \in H^1(\Omega )$ satisfying
\begin{equation}\label{r1}
a(u_\sigma ,\psi )= \int_\Omega \sigma \nabla u_\sigma \cdot \nabla \psi dx+\int_{\Gamma}\beta u_\sigma \psi dS(x)=\langle f,\psi \rangle ,\;\;\psi \in H^1(\Omega ).
\end{equation}
Note that $u_\sigma$ is nothing but the variational solution of the BVP \eqref{eqr}.

\smallskip
From \eqref{r1}, we easily get
\[
\|u_\sigma \|_{H^1(\Omega )}\le \underline{\sigma}_0\|f\|_{(H^1(\Omega ))'},\;\; \mbox{with}\; \underline{\sigma}_0=\min (\sigma _0,\beta _0).
\]
Consequently, $A_\sigma ^R$ possesses a bounded inverse $R_\sigma ^R=(A_\sigma ^R)^{-1}:(H^1(\Omega ))'\rightarrow H^1(\Omega )$ defined by $R_\sigma f:=u_\sigma$ for $f\in (H^1(\Omega))'$.

\smallskip
Concerning the inverse problem for Robin boundary conditions,  starting  from
\[
\langle  (A_\sigma ^R -A_{\widetilde{\sigma}}^R)u, v \rangle _{-1,1}= \int_\Omega (\sigma -\widetilde{\sigma})\nabla u \cdot \nabla v dx,\;\; u,v\in H^1(\Omega ),
\]
we get similarly to the Neumann case
\[
\underline{\sigma}_0^2\|R_\sigma -R_{\widetilde{\sigma}}\|_{-1,1} \le \|\sigma -\widetilde{\sigma}\|_{L^\infty (\Omega )}\le \underline{\sigma}_1^2\|R_\sigma -R_{\widetilde{\sigma}}\|_{-1,1},
\]

\section{Non homogeneous BVP's}

In this section $\Omega$ is a $C^2$-smooth bounded domain of $\mathbb{R}^n$, $n\ge 2$, diffeomorphic to the unit ball of $\mathbb{R}^n$. Its boundary is denoted again by $\Gamma$.

\smallskip
Let $\sigma \in \Sigma$. For $g\in H^{\frac{1}{2}}(\Gamma )$, we denote by $u_\sigma\in H^1(\Omega )$ the unique weak solution of the BVP
 \[
 \mbox{div}(\sigma \nabla u)=0\; \mbox{in}\; \Omega \;\; \mbox{and}\;\; u=g\; \mbox{on}\; \Gamma .
 \]
Let $G\in H^1(\Omega )$ so that $G=g$ on $\Gamma$ and $\|G\|_{H^1(\Omega )}=\|g\|_{H^{\frac{1}{2}}(\Gamma )}$, where we identified $H^{\frac{1}{2}}(\Gamma )$ to the quotient space $H^1(\Omega )/H_0^1(\Omega )$. Then $f=-\mbox{div}(\sigma \nabla G)\in H^{-1}(\Omega )$ and 
\[
\| f\|_{H^{-1}(\Omega )}\le \sigma _1\|G\|_{H^1(\Omega )}=\sigma _1\|g\|_{H^{\frac{1}{2}}(\Gamma )}.
\]
On the other hand, it is straightforward to check that $u_\sigma =G+R_\sigma f$, $R_\sigma$ being the Dirichlet resolvent defined above. Therefore
\begin{equation*}
\|u_\sigma \|_{H^1(\Omega )}\le \|G\|_{H^1(\Omega )}+\sigma_0^{-1}\|f\|_{H^{-1}(\Omega )}
\le (1+\sigma_0^{-1}\sigma _1)\|g\|_{H^{\frac{1}{2}}(\Gamma )}.
\end{equation*}
Then $\Lambda_\sigma$ given by $\Lambda_\sigma g:=u_\sigma$ defines a bounded operator from $H^{\frac{1}{2}}(\Gamma )$ into $H^1(\Omega )$ and \[\|\Lambda_\sigma \|_{\frac{1}{2},1}\le 1+\sigma_0^{-1}\sigma _1.\] Here and in the sequel $\|\cdot \|_{\frac{1}{2},1}$  denotes the norm in $\mathscr{B}(H^{\frac{1}{2}}(\Gamma ),H^1(\Omega ))$.

Fix $\overline{g}\in C^2(\Gamma )$ so that  
\[
\Gamma _-=\{ x\in \Gamma ;\; \overline{g}(x)=\min \overline{g}\}\quad  \Gamma _+=\{ x\in \Gamma ;\; \overline{g}(x)=\max \overline{g} \}
\]
are nonempty and connected, and the following condition is fulfilled: there exists a continuous strictly increasing function  $\psi :[0,\infty )\rightarrow [0,\infty )$ with $\psi (0)=0$ and $\rho _0>0$ so that, for any $0<\rho \le \rho_0$,
\[
|\nabla _\tau \overline{g}|\ge \psi (\rho ),\;\mbox{on}\; \{x\in \Gamma ;\; \mbox{dist}(x,\Gamma _-\cup \Gamma _+)\ge \rho \}.
\]
where $\nabla _\tau$ denotes the tangential gradient.

\smallskip
Such a function is called quantitatively unimodal in \cite{ADFV}. 

\smallskip
We point out that the existence of a quantitatively unimodal function is guaranteed by the assumption that $\Omega$ is diffeomorphic to the unit ball.

\smallskip
For $\sigma _1>\sigma_0$, define
\[
\mathcal{E} =\{\sigma \in W^{1,\infty}(\Omega );\; \sigma_0 \le \sigma \; \mbox{and}\; \|\sigma \|_{W^{1,\infty}(\Omega )}\le \sigma _1\}.
\]

\begin{theorem}\label{sta1}
{\rm (\cite[Theorem 3.5]{ADFV})} There exist two constants $C>0$ and $\gamma >0$, that can depend on $\Omega$, $\mathcal{E}$ and $\overline{g}$, so that
\[
\| \sigma -\widetilde{\sigma}\|_{L^\infty(\Omega )}\le C\|\Lambda_\sigma \overline{g}-\Lambda_{\widetilde{\sigma}} \overline{g}\|_{L^2(\Omega )}^\gamma ,\;\; \sigma ,\widetilde{\sigma}\in \mathcal{E}_0,
\]
where $\mathcal{E}_0=\{ \sigma \in \mathcal{E};\; \sigma =\overline{\sigma}\; \mbox{on}\; \Gamma\}$, for some fixed $\overline{\sigma}\in \mathcal{E}$. 
\end{theorem}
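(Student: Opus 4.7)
The plan is to reduce the inverse problem to a quantitative unique continuation estimate for the difference equation, exploiting the unimodality assumption on $\overline{g}$ to guarantee that $\nabla u_{\widetilde{\sigma}}$ has a quantitative non-degeneracy in $\Omega$.

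First, I would set $u=\Lambda_\sigma \overline{g}$, $\widetilde{u}=\Lambda_{\widetilde{\sigma}}\overline{g}$ and $w=u-\widetilde{u}$. Subtracting the two elliptic equations gives
\[
\mbox{div}(\sigma \nabla w)=\mbox{div}((\widetilde{\sigma}-\sigma)\nabla \widetilde{u})\;\; \mbox{in}\; \Omega ,\qquad w=0\; \mbox{on}\;\Gamma ,
\]
together with $\sigma=\widetilde{\sigma}$ on $\Gamma$ (since both coefficients lie in $\mathcal{E}_0$). Thanks to the $W^{1,\infty}$ bound on the coefficients and standard interior elliptic regularity, $\widetilde{u}\in H^2_{\mathrm{loc}}(\Omega)$ with a priori bounds, and Schauder/Meyers type estimates up to a neighbourhood of $\Gamma$ give pointwise control of $\nabla \widetilde{u}$ near the boundary.

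The second step is to localize. In any ball $B\Subset \Omega$ where $|\nabla \widetilde{u}|\ge \eta>0$ quantitatively, one can write $\sigma-\widetilde{\sigma}$ in terms of $w$ and its derivatives (plus $\nabla\sigma \cdot \nabla w$ terms). A Caccioppoli plus interpolation argument then yields a \emph{local} estimate of the form
\[
\|\sigma-\widetilde{\sigma}\|_{L^\infty(B)}\le C(\eta)\,\|w\|_{L^2(2B)}^{\gamma _0}
\]
for some $\gamma_0\in(0,1)$, using the $W^{1,\infty}$ a priori bound to absorb higher-order norms by interpolation. To start this local machinery one needs an initial region where the gradient of $\widetilde{u}$ is uniformly nondegenerate; the unimodality hypothesis on $\overline{g}$, combined with boundary regularity and Hopf's lemma in a tubular neighbourhood of $\Gamma \setminus (\Gamma_-\cup\Gamma_+)$, supplies exactly such a region (this is where the modulus $\psi$ enters quantitatively).

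The third and technically most demanding step is to propagate this boundary-layer estimate to the whole of $\Omega$. For this I would appeal to the quantitative three-sphere (or doubling) inequalities of Garofalo–Lin type for solutions of divergence form elliptic equations, applied to $\widetilde{u}$ and to $w$. These propagate both the lower bound on $|\nabla \widetilde{u}|$ (away from the, at most discrete, set of interior critical points of a unimodal harmonic-like function) and the smallness of $w$ from the boundary layer to any compact subset of $\Omega$, with an exponentially-compounding loss that amounts to raising the local estimate to a (smaller) power. Chaining the local estimates along a suitable finite covering of $\Omega$ by balls yields
\[
\|\sigma -\widetilde{\sigma}\|_{L^\infty(\Omega)}\le C\|w\|_{L^2(\Omega)}^{\gamma},
\]
with $\gamma\in(0,1)$ depending only on $\Omega$, $\mathcal{E}$ and $\overline{g}$.

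The main obstacle, and the reason one only obtains H\"older rather than Lipschitz stability as in Theorem \ref{theorem1}, is this propagation step: the elliptic smoothing of the Dirichlet-to-solution map prevents the boundary measurement from carrying information into the interior in a Lipschitz manner, and the quantitative unique continuation estimates necessarily produce a H\"older exponent $\gamma<1$. Controlling the location and multiplicity of interior critical points of $\widetilde{u}$ when only $W^{1,\infty}$ regularity of the coefficients is available is the other delicate point, which the ADFV argument handles through the quantitative unimodality of $\overline{g}$ and geometric measure-theoretic estimates on critical sets.
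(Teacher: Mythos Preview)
The paper does not prove this statement at all: Theorem~\ref{sta1} is quoted verbatim from \cite[Theorem~3.5]{ADFV} and used as a black box (its only role is to feed into Corollary~\ref{sta2}). There is therefore nothing in the present paper to compare your proposal against.

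That said, your sketch is broadly in the spirit of what one expects from the ADFV argument (difference equation, quantitative non-degeneracy of $\nabla\widetilde u$ coming from the unimodal boundary datum, propagation via quantitative unique continuation), but several steps are only gestures rather than arguments. Two points in particular would need real work before this could count as a proof. First, your claim that interior critical points of $\widetilde u$ form an ``at most discrete'' set is not automatic in dimension $n\ge 2$ for divergence-form operators with merely $W^{1,\infty}$ coefficients; controlling the critical set quantitatively is exactly the hard part and is where the unimodality hypothesis and the specific ADFV machinery are used in an essential way. Second, the local step ``write $\sigma-\widetilde\sigma$ in terms of $w$ and its derivatives'' and then interpolate to get an $L^\infty$ bound with exponent $\gamma_0<1$ is not made precise: you need to explain which identity you use (e.g.\ how you isolate $\sigma-\widetilde\sigma$ from $\mathrm{div}((\widetilde\sigma-\sigma)\nabla\widetilde u)$ pointwise when you only control $w$ in $L^2$), and why the a~priori $W^{1,\infty}$ bound on the coefficients suffices to close the interpolation. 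As written, this step hides the entire mechanism that produces the H\"older exponent.
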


This result is essential in the stability issue of the problem of determining the  conductivity coefficient from two attenuated energy densities obtained from well chosen two illuminations. This problem is related to the so-called qualitative photo-acoustic tomography. We refer to  \cite{ADFV} and the references therein for more details on this topic.

\smallskip
As a consequence of Theorem \ref{sta1} we readily obtain:

\begin{corollary}\label{sta2}
There exist two constants $C>0$ and $\gamma >0$, that can depend on $\Omega$ and $\mathcal{E}$, so that 
\[
\| \sigma -\widetilde{\sigma}\|_{L^\infty(\Omega )}\le C\|\Lambda_\sigma -\Lambda_{\widetilde{\sigma}} \|_{\frac{1}{2},1}^\gamma ,\;\; \sigma ,\widetilde{\sigma}\in \mathcal{E}_0,
\]
where $\mathcal{E}_0$ is as in the preceding theorem.
\end{corollary}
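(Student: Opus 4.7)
The proof is a direct specialization of Theorem \ref{sta1} combined with the definition of the operator norm $\|\cdot\|_{\frac{1}{2},1}$. My plan is as follows.

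First, I would recall the continuous embedding $H^1(\Omega)\hookrightarrow L^2(\Omega)$, so that for any $h\in H^{1/2}(\Gamma)$ one has
\[
\|(\Lambda_\sigma-\Lambda_{\widetilde{\sigma}})h\|_{L^2(\Omega)}\le \|(\Lambda_\sigma-\Lambda_{\widetilde{\sigma}})h\|_{H^1(\Omega)}\le \|\Lambda_\sigma-\Lambda_{\widetilde{\sigma}}\|_{\frac{1}{2},1}\,\|h\|_{H^{1/2}(\Gamma)}.
\]
Applying this with $h=\overline{g}$, the fixed quantitatively unimodal boundary datum of Theorem \ref{sta1}, yields
\[
\|\Lambda_\sigma\overline{g}-\Lambda_{\widetilde{\sigma}}\overline{g}\|_{L^2(\Omega)}\le \|\overline{g}\|_{H^{1/2}(\Gamma)}\,\|\Lambda_\sigma-\Lambda_{\widetilde{\sigma}}\|_{\frac{1}{2},1}.
\]
Since $\overline{g}\in C^2(\Gamma)$ is fixed and its trace regularity gives $\overline{g}\in H^{1/2}(\Gamma)$ with a finite norm depending only on $\overline{g}$, the constant $\|\overline{g}\|_{H^{1/2}(\Gamma)}$ can be absorbed in the generic constant $C$ below.

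Next, I would invoke Theorem \ref{sta1} directly: for $\sigma,\widetilde{\sigma}\in \mathcal{E}_0$ there exist $C_0>0$ and $\gamma>0$, depending only on $\Omega$, $\mathcal{E}$ and $\overline{g}$, such that
\[
\|\sigma-\widetilde{\sigma}\|_{L^\infty(\Omega)}\le C_0\,\|\Lambda_\sigma\overline{g}-\Lambda_{\widetilde{\sigma}}\overline{g}\|_{L^2(\Omega)}^\gamma.
\]
Combining this with the previous display and setting $C:=C_0\,\|\overline{g}\|_{H^{1/2}(\Gamma)}^\gamma$, I would obtain
\[
\|\sigma-\widetilde{\sigma}\|_{L^\infty(\Omega)}\le C\,\|\Lambda_\sigma-\Lambda_{\widetilde{\sigma}}\|_{\frac{1}{2},1}^\gamma,
\]
which is the desired conclusion. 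Since $\overline{g}$ is fixed once and for all (its existence being guaranteed by the diffeomorphism assumption on $\Omega$), the constant $C$ and the exponent $\gamma$ depend only on $\Omega$ and $\mathcal{E}$, as claimed.

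There is no real obstacle here: the statement is precisely the operator-norm version of Theorem \ref{sta1}, and the passage from the single-datum $L^2$ estimate to the operator-norm $H^{1/2}\!\to\! H^1$ estimate is a one-line application of the boundedness of the evaluation $\Lambda\mapsto \Lambda\overline{g}$ combined with the continuous embedding $H^1\hookrightarrow L^2$. The only point deserving a brief comment is that the regularity $\overline{g}\in C^2(\Gamma)$ imposed in Theorem \ref{sta1} is automatically compatible with $\overline{g}\in H^{1/2}(\Gamma)$, so no additional regularity hypothesis is required.
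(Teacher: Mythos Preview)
Your proposal is correct and coincides with the paper's approach: the paper states the corollary as an immediate consequence of Theorem~\ref{sta1} without writing out a proof, and the argument you give---bounding $\|\Lambda_\sigma\overline{g}-\Lambda_{\widetilde{\sigma}}\overline{g}\|_{L^2(\Omega)}$ by $\|\overline{g}\|_{H^{1/2}(\Gamma)}\|\Lambda_\sigma-\Lambda_{\widetilde{\sigma}}\|_{\frac{1}{2},1}$ via the embedding $H^1(\Omega)\hookrightarrow L^2(\Omega)$ and then invoking Theorem~\ref{sta1}---is precisely the natural one-line justification behind the phrase ``readily obtain.''
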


This result can be interpreted as a  H\"older stability estimate on the determination of $\sigma$ from $\Lambda_\sigma$.


\begin{remark}
\rm
Denote the lifting operator $g\rightarrow G$, defined above, by $E$ and, for $\sigma \in \mathcal{E}$, consider the operator $L_\sigma$ given by
\[
L_\sigma : F\in H^1(\Omega )\mapsto L_\sigma F={\rm div}(\sigma \nabla F)\in H^{-1}(\Omega ).
\]
Then one can check in a straightforward manner that $\Lambda_\sigma =E+R_\sigma L_\sigma E$. Therefore, the mapping 
\[
\sigma \in \mathcal{E}\mapsto \Lambda_\sigma \in \mathscr{B}(H^{\frac{1}{2}}(\Gamma ),H^1(\Omega ))
\]
is Lipschitz continuous. Whence, with reference to  Corollary \ref{sta2}, we get, for some constants $c>0$ and $C>0$,
\[
\| \sigma -\widetilde{\sigma}\|_{L^\infty(\Omega )}\le Cc^\gamma \| \sigma -\widetilde{\sigma}\|_{L^\infty(\Omega )}^\gamma ,\;\; \sigma ,\widetilde{\sigma}\in \mathcal{E}_0,
\]
or equivalently
\[
c\| \sigma -\widetilde{\sigma}\|_{L^\infty(\Omega )}\le Cc\left(c \| \sigma -\widetilde{\sigma}\|_{L^\infty(\Omega )}\right)^\gamma ,\;\; \sigma ,\widetilde{\sigma}\in \mathcal{E}_0.
\]

As $\| \sigma -\widetilde{\sigma}\|_{L^\infty(\Omega )}$ can be chosen arbitrarily small, we conclude that $\gamma \le1$.

We do not know whether, actually, $\gamma <1$ or not. Explicit computations can be carried out in the one-dimensional case, very much as in the next section. But  Theorem \ref{sta1}, which genuinely of multi-dimensional nature,  fails in this case since two different diffusivities, one multiple of the other, cannot be distinguished from boundary values. 
\end{remark}


\section{The  one-dimensional case}

\subsection{An explicit representation formula}
For the sake of simplicity, we limit our analysis to a BVP with mixed boundary conditions. Specifically, we consider the BVP
\begin{equation}\label{6.1}
-(\sigma (x)u_x)_x=f\; \mbox{in}\; (0,1),\;\; u_x(0)=0\;\; \mbox{and}\;\; u(1)=0.
\end{equation}

Let $\sigma _0 < \sigma _1 $ be two positive constants and $$\Sigma ^0=\{\sigma \in L^\infty (0,1);\; 0<\sigma _0\le \sigma \le \sigma _1 \; \mbox{a.e. in}\; (0,1)\}$$ and $$H=\{u\in H^1(0,1);\; u(1)=0\}.$$ It is a classical result that $u\in H\rightarrow \|u_x\|_{L^2(0,1)}$ defines a norm on $H$ which is equivalent to the norm induced by the usual norm on $H^1(0,1)$. In the sequel $H$ is equipped with this norm.

\smallskip
According to Lax-Milgram's lemma or Riesz's representation theorem, for each $f\in H'$, there exists a unique $u=u_\sigma \in H$ so that
\[
\int_0^1 \sigma (x)u_x(x)v_x(x) dx=\langle f,v\rangle,\;\; \mbox{for any}\; v\in H,
\]
where $\langle \cdot,\cdot\rangle$ is the duality pairing between $H$ and its dual $H'$. Note that $u_\sigma$ is nothing but the variational solution of the BVP \eqref{6.1}.

\smallskip
Therefore $R_\sigma :f\in H'\rightarrow u_\sigma \in H$ defines a bounded operator with
\[
\|R_\sigma f\|_H\le \sigma _0^{-1}\|f\|_{H'}.
\]

\smallskip
Pick $f\in L^2(0,1)$ and set
\[
v(x)=\int_x^1\frac{1}{\sigma (t)}\int_0^tf(s)ds dt,\;\; x\in [0,1].
\]
Clearly $v$ is absolutely continuous, $v(1)=0$ and 
\begin{equation}\label{6.2}
v_x(x)=-\frac{1}{\sigma (x)}\int_0^x f(t)dt\;\; \mbox{a.e.}\; (0,1).
\end{equation}
On the other hand, if $w\in H$, we get by applying Green's formula
\[
\int_0^1\sigma (x)v_x(x)w_x(x)=-\int_0^1w_x(x)\left(\int_0^xf(t)dt\right)dx=\int_0^1w(x)f(x)dx.
\]
In other words, $v=R_\sigma f$.

\subsection{Lipschitz stability}

In view of the explicit representation formula above   it is convenient to introduce the space $W^{-1,1}(0,1)$, the closure of $C_0^\infty (0,1)$ for the norm

\[
\|f \|_{W^{-1,1}(0,1)} = \left\| \int_0^xf(t)dt \right\|_{L^1((0,1))}.
\]

The resolvent operators, according to the explicit representation formula above, can be represented as
\begin{equation}\label{6.1.1}
T_mf(x)=m(x)\int_0^xf(t)dt,\;\; \mbox{a.e.}\;\; x\in (0,1),
\end{equation}
where $$m=\frac{1}{\sigma}.$$

Given $m\in L^\infty (0,1)$, these operators can be naturally understood in the functional setting  of linear bounded operators, $T_m:W^{-1, 1}(0,1)\rightarrow L^1(0,1)$. We denote by $\mathscr{B}(W^{-1,1}(0, 1), L^1(0,1))$ this Banach space and by $\|\cdot \|_{-1,1}$ its norm.

The following holds:

\begin{lemma}\label{lemma6.1}
Let $m\in L^\infty (0,1)$ and $T_m:W^{-1, 1}(0,1)\rightarrow L^1(0,1)$ as in (\ref{6.1.1}).
Then \begin{equation}
\|T_m\|_{-1,1}=\|m\|_{L^\infty (0,1)}.
\end{equation}

\end{lemma}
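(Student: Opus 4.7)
I would prove the two inequalities $\|T_m\|_{-1,1} \le \|m\|_{L^\infty(0,1)}$ and $\|T_m\|_{-1,1} \ge \|m\|_{L^\infty(0,1)}$ separately, working with $f \in C_0^\infty(0,1)$ and extending by density. The upper bound is immediate: setting $F(x)=\int_0^x f(t)\,dt$, the very definition of the $W^{-1,1}$-norm gives $\|f\|_{W^{-1,1}(0,1)} = \|F\|_{L^1(0,1)}$, while $T_m f(x) = m(x)F(x)$ yields $|T_mf(x)| \le \|m\|_{L^\infty(0,1)} |F(x)|$ pointwise; integrating finishes this direction and simultaneously shows that $T_m$ extends boundedly to $W^{-1,1}(0,1)$.

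For the reverse inequality, my strategy mirrors the localization used in Lemma \ref{lemma0.1}, now applied to a $W^{-1,1}$--$L^1$ pairing rather than $H_0^1$--$H_0^1$. Fix $\delta \in (0,1)$ and select a Lebesgue point $x_0 \in (0,1)$ of $|m|$ at which $|m(x_0)| \ge (1-\delta)\|m\|_{L^\infty(0,1)}$; such a point exists because the Lebesgue points form a set of full measure. For small $\epsilon>0$ with $[x_0-\epsilon,x_0+\epsilon] \subset (0,1)$, let $\chi_\epsilon \in C_0^\infty(0,1)$ be a nonnegative smooth cutoff, equal to $1$ on $[x_0-\epsilon/2,x_0+\epsilon/2]$, supported in $[x_0-\epsilon,x_0+\epsilon]$, and bounded by $1$; set $f_\epsilon := \chi_\epsilon'$. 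Since $\chi_\epsilon$ vanishes near $0$ one has $F_\epsilon(x) := \int_0^x f_\epsilon = \chi_\epsilon(x)$, and therefore
\[
\|f_\epsilon\|_{W^{-1,1}(0,1)} = \int_0^1 \chi_\epsilon(x)\,dx, \qquad \|T_m f_\epsilon\|_{L^1(0,1)} = \int_0^1 |m(x)|\chi_\epsilon(x)\,dx.
\]
The ratio of these two quantities is a weighted average of $|m|$ supported on $[x_0-\epsilon,x_0+\epsilon]$; sandwiching $\chi_\epsilon$ between the characteristic functions of $[x_0-\epsilon/2,x_0+\epsilon/2]$ and $[x_0-\epsilon,x_0+\epsilon]$ and invoking Lebesgue's differentiation theorem at $x_0$ shows that this ratio converges to $|m(x_0)|$ as $\epsilon \to 0$. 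Hence $\|T_m\|_{-1,1} \ge (1-\delta)\|m\|_{L^\infty(0,1)}$, and sending $\delta \to 0$ closes the argument.

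I do not foresee a substantive obstacle: both directions are Lebesgue-differentiation arguments and the construction of $f_\epsilon$ is explicit. The only care required is to verify that admissible test functions lie in $C_0^\infty(0,1)$, hence in $W^{-1,1}(0,1)$ by density, and that the $\chi_\epsilon$-weighted averages of $|m|$ are actually controlled by plain averages over concentric intervals (the standard sandwich trick needed to invoke Lebesgue differentiation with smooth-bump weights rather than sharp indicators).
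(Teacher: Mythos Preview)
Your proof is correct and follows essentially the same approach as the paper's: the upper bound is immediate from the definitions, and for the lower bound both you and the paper localize by choosing $f_\epsilon$ so that the primitive $F_\epsilon=\int_0^x f_\epsilon$ concentrates near a point $x_0$ (the paper phrases this as taking $F_\epsilon$ to be an approximation of the identity around $x_0$, exactly as in Lemma~\ref{lemma0.1}). Your write-up is simply more explicit; the only place to tighten is the ``sandwich'' step, where the cleanest justification is to write $\int |m|\chi_\epsilon = |m(x_0)|\int\chi_\epsilon + \int(|m|-|m(x_0)|)\chi_\epsilon$ and use the Lebesgue-point property to see the second term is $o(\epsilon)$ while $\int\chi_\epsilon\ge\epsilon$.
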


\begin{proof}
Firstly, it is straightforward to check that $$\|T_m\|_{-1,1}\le \|m\|_{L^\infty ((0,1))}.$$ 

The reverse inequality can be easily derived as in Lemma \ref{lemma0.1}, by taking a sequence $f_\varepsilon$ so that the corresponding primitives
$$
F_\varepsilon = \int_0^x f_\varepsilon(t) dt,
$$
constitute an approximation of the identity around each $x_0 \in (0, 1)$.
 \end{proof}


Given two diffusivity coefficients $\sigma ,\widetilde{\sigma}\in \Sigma ^0$, formula \eqref{6.2} yields
\[
\left( R_\sigma f-R_{\widetilde{\sigma}}f\right)_x=\left(\frac{1}{\widetilde{\sigma}(x)}-\frac{1}{\sigma(x)}\right)\int_0^x f(t)dt\;\; \mbox{a.e.}\; (0,1).
\]
In view of Lemma \ref{lemma6.1} it is natural to analyze the norms of these resolvent operators and their distances in the norm
$$
|| R_\sigma  ||_{*}  = || T_{1/\sigma} ||_{-1, 1}.
$$
As a consequence of Lemma \ref{lemma6.1}
$$
|| R_\sigma -R_{\widetilde{\sigma}} ||_{*}  = \left\| \frac{1}{\widetilde{\sigma}(x)}-\frac{1}{\sigma(x)}\right\|_{L^\infty((0, 1))}.
$$

Obviously, using the uniform upper and lower bounds on the coefficients, this also allows to get estimates in terms of the $L^\infty(0,1)$-distances between coefficients:
\begin{equation}\label{identitynorms}
\sigma_1^{-2}|| \widetilde{\sigma}-\sigma ||_{L^\infty(0, 1)} \le || R_\sigma -R_{\widetilde{\sigma}} ||_{*}  \le \sigma_0^{-2} || \widetilde{\sigma}-\sigma ||_{L^\infty(0, 1)}.
\end{equation}

This is so because 
$$
\frac{\sigma - \widetilde{\sigma} }{\sigma  \widetilde{\sigma} }= \frac{1}{\widetilde{\sigma}}-\frac{1}{\sigma}.
$$

\subsection{Distance to a subspace}\label{surrogate}
As we shall see in the following section, in the application of greedy algorithms we need to further develop the computations above to achieve precise Lipschitz stability estimates  on the distance from one given resolvent to the subspace generated by a finite number of others.

To do this, we consider a distinguished coefficient that we denote by $\tau(x)$ and $N\ge 2$ others, $\sigma_1(x), \cdots, \sigma_N(x)$, and denote the corresponding resolvents by $R_\tau$ and $R_1, \cdots, R_N$, respectively.

As a straightforward application of  identity (\ref{6.2}), we have
\begin{equation}\label{distinv}
\left( R_\tau f-\sum_{i=1}^N a_i R_{i}f\right)_x=\left(\sum_{i=1}^N \frac{a_i}{\sigma_i(x)}-\frac{1}{\tau(x)}\right)\int_0^x f(t)dt\;\; \mbox{a.e.}\; (0,1)
\end{equation}
that yields the representation of the difference of a resolvent with respect to the linear combination of a finite number of others.

Arguing as above we can conclude that
\begin{equation}\label{1dssurrogate}
\left \| R_\tau -\sum_{i=1}^N a_i R_{i} \right \|_* = \left \| \sum_{i=1}^N \frac{a_i}{\sigma_i(x)}-\frac{1}{\tau(x)} \right \|_{L^\infty((0, 1))}.
\end{equation}

In other words, we have shown that the $L^\infty$-distance between inverses of coefficients, yields an adequate surrogate for the distance between the resolvents : 
\begin{equation}\label{1dsurrogate}
 \mbox{dist}_{*}(R_\tau, \mbox{span} [R_{i}, 1\le i \le N] ) = \mbox{dist}_{L^\infty(0, 1)} \left(\frac{1}{\tau(x)}, \mbox{span} \left[ \frac{1}{\sigma_i(x)}, 1\le i \le N \right] \right).
\end{equation}

Here the distance $\mbox{dist}_{*}$ stands for the one given in terms of the $\| \cdot \|_*$-norm.

\smallskip

Note that the methods of the previous sections do not allow to achieve similar results in the multi-dimensional case. In particular, the analysis of the one-dimensional case shows that when dealing with the distance between a resolvent to the span of several others one has to analyze nonlinear expressions involving the diffusivity coefficients. 

\section{Application to greedy algorithms for parameter depending elliptic equations}

In recent years there has been a significant body of literature developed on greedy methods to approximate parameter-dependent elliptic problems of the form
\begin{equation}\label{ellipticparameter}
-\mbox{div}(\sigma  (x,\mu ) \nabla u)=f\; \mbox{in}\; \Omega , \quad u=0\; \mbox{on}\; \Gamma.
\end{equation}
We refer for instance to \cite{BCDDPW, Buffa12, CDV, CD15, DV15, DPW13}.

\smallskip
Roughly, the problem has been formulated and addressed  as follows. 

\smallskip
Assume that $\sigma (x, \mu) \in \Sigma$ (with $\Sigma$ as in previous sections) depends on a multi-parameter $\mu$ living on a compact set $\mathscr{K}$ of $\mathbb{R}^d$ with $d \ge 1$ finite. We denote by  $\mathscr{S}$ the parametrized set of coefficients $\sigma(x, \mu )\in \Sigma$ for all value of $\mu$. 

\smallskip
Given a fixed $f\in H^{-1}(\Omega)$ and solving  (\ref{ellipticparameter}) we get the set $\mathscr{U}$ of the corresponding solutions $u(x, \mu) \in H^1_0(\Omega), \mu \in \mathscr{K}$.
This set  inherits the regularity  of  the coefficients $\sigma(x, \mu ) \in \Sigma$ in its dependence with respect to $\mu$. 

\smallskip
The question that has been considered so far  consists in identifying the most distinguished values of the parameter $\mu$ to better approximate the set of solutions $\mathscr{U}$, for that specific given right hand side term 
 $f\in H^{-1}(\Omega)$. This has been done applying (weak) greedy algorithms obtaining optimal approximation rates for $\mathscr{U}$. But, proceeding that way, the sequence of most relevant snapshots $\mu_n$ that the algorithm gives depends on the right hand side term $f \in H^{-1}(\Omega)$ and 
different right hand side terms $f$ lead to different choices of the snapshots of $\mu$.

\smallskip
 Theorem \ref{theorem1} was developed in an attempt  to apply the same methods independently of the specific value of the right hand side term $f \in H^{-1}(\Omega)$. However, this program can only be achieved so far in the one-dimensional case where our analysis was much more complete.

\smallskip

 For this to be done one needs to deal with the set $\mathscr{R}$ of resolvent operators $R(\mu)$ in $\mathscr{B}(H^{-1}(\Omega ),H_0^1(\Omega ))$ for all $\mu \in \mathscr{K}$ which inherit the continuity and regularity properties of the coefficients $\sigma(x, \mu)\in \Sigma$ in its dependence with respect to $\mu$. For instance, if the map $\mu \in 
\mathscr{K} \to \sigma(x, \mu)\in L^\infty(\Omega)$ is continuous, the same occurs for the map $\mu \in 
\mathscr{K} \to R(\mu) \in \mathscr{B}(H^{-1}(\Omega ),H_0^1(\Omega ))$. The same can be said about the $C^k$, $C^\infty$ or analytic dependence. On the other hand, the compactness of the set 
$\mathscr{K}$ together with the continuous dependence on $\mu$ ensures the compactness of $\mathscr{R}$ in $\mathscr{B}(H^{-1}(\Omega ),H_0^1(\Omega ))$.

\smallskip
The goal is then to approximate the  compact set of resolvents $\mathscr{R}$ of the Banach space $\mathscr{B}(H^{-1}(\Omega ),H_0^1(\Omega ))$ by a sequence of finite dimensional subspaces $V_n$ of dimension $n\ge 1$.  The weak greedy algorithms yield the subspaces $V_n$ that approximate the set $\mathscr{R}$ in the best possible manner, in the sense of the Kolmogorov $n$-width.
The subspaces $V_n$ are defined as the span of the most distinguished resolvent operators $R(\mu_1),\ldots , R(\mu_n)$ with $\mu_1,...,\mu_n$ chosen as follows.

\smallskip
Fix a constant $\gamma \in (0, 1)$. 
Choose  $\mu_1 \in \mathscr{K}$ such that 
\begin{equation}
\label{u_1}
||R(\mu_1) ||_{-1,1} \geq \gamma  \max_{\mu \in \mathscr{K}} ||R(\mu) ||_{-1,1} .
\end{equation}
We then proceed in a recursive manner. Having found $\mu_1,\ldots ,\mu_n$, denote $V_n={\rm span} \{R(\mu_1), \ldots, R(\mu_n)\} $ and
choose the next element $\mu_{n+1}$ such that
\begin{equation}
\label{general-step}
{\rm dist}(R(\mu_{n+1}), V_n) \geq \gamma  \max_{\mu \in \mathscr{K}} {\rm dist}(R(\mu), V_n)\,.
\end{equation}

From the previous existing theory (see \cite{DPW13} and \cite{D15}) this algorithm is well known to yield nearly optimal approximation rates of the parameterized set of resolvents $\mathscr{R}$. As indicated in the previous references because, now, we are in a Banach space setting, $\mathscr{B}(H^{-1}(\Omega ),H_0^1(\Omega ))$, a loss of the order $1/2$ on the approximation rate of the Kolmogorov width has to be expected.

\smallskip
However, the difficulty on its implementation consists in computing, in each step, the distance ${\rm dist}(R(\mu), V_n)$. This would require, in particular, computing $R(\mu)$ for all values of $\mu$ and this is unfeasible and, precisely, what we want to avoid.

\smallskip
But the existing theory has also developed a means of bypassing this difficulty. In fact, it is well known that the same algorithm yields optimal approximation rates if it is implemented, as indicated above, but  replacing ${\rm dist}(R(\mu), V_n)$ by a ``surrogate", i.e. a different distance function, easier to be computed, and giving a uniform bound from below for the true distance ${\rm dist}(R(\mu), V_n)$.

\smallskip
Theorem \ref{theorem1} is a first attempt in that direction, ensuring that the $L^1$-distance between two  coefficients  provides a lower bound on the distance between the resolvents.
But the issue of finding true surrogates for the distance of a resolvent to the subspace generated by a finite number of others is open in the general multi-dimensional case. The results in Subsection \ref{surrogate} yield such a surrogate in dimension $n=1$, see (\ref{1dsurrogate}).

\smallskip
Accordingly, in one space dimension, $n=1$, the  implementation of the weak greedy algorithm for the approximation of the parameter set of resolvents can be done as above, by modifying the recursive step as follows: Fix some $0 <\gamma < 1$. Having found $\mu_1, \ldots , \mu_n$, with the corresponding diffusivity coefficients $\sigma_1(x),\ldots , \sigma_N(x)$, to
choose the next element $\mu_{n+1}$ such that the corresponding diffusivity coefficient $\sigma_{N+1}$ satisfies
\begin{align}\label{greedyL1}
{\rm dist}_{L^\infty(0, 1)} \left (\frac{1}{\sigma_{N+1}}, {\rm span} \left [ \frac{1}{\sigma_i}; i, \dots, N \right ] \right ) \geq \gamma  \max_{\mu \in \mathscr{K}}{\rm dist}_{L^\infty(0, 1)} \left (\frac{1}{\sigma(\mu)}, {\rm span} \left [ \frac{1}{\sigma_i}; i=1, \dots, N \right ] \right )\,.
\end{align}

The important consequence of this fact is that, for the identification of the most relevant parameter values $\mu_n$, we do not  need to solve the elliptic equation, but simply deal with the family of coefficients $\sigma(x, \mu)$, solving a by now classical $L^\infty$-minimisation problem in an approximated manner as indicated in (\ref{greedyL1}) by a multiplicative factor ($0<\gamma<1$). Once this choice of $\mu_n$ is done, this readily allows identifying the most relevant resolvent or elliptic problem,  for all values of the right hand side term $f\in H^{-1}(\Omega)$, contrarily to previous developments where the choice of these snapshots was $f$-dependent.

\smallskip

The choice of the parameters $\mu_n$ that we achieve in this manner is optimal from the point of view of the approximation of the resolvents and can then be applied to any $f\in H^{-1}(\Omega)$, as mentioned above. But, of course, for a specific value of $f\in H^{-1}(\Omega)$, the ad-hoc application of the weak greedy method will lead to better approximations. But for this to be done one has to afford  the cost of implementing the weak greedy method for each $f\in H^{-1}(\Omega)$ again and again. The advantage of the method developed in this paper is that it leads to uniform, robust approximation results, valid for all $f\in H^{-1}(\Omega)$ and can be used as a preconditioner to later use further greedy arguments, adapted to each right hand side term $f$.

\smallskip
In practice, given an arbitrary value of $\mu$, the resolvent can be identified with the corresponding multiplier $1/\sigma(\mu)$. Therefore, it can be approximated by a suitable linear combination of the weak-greedy offline choices $1/\sigma_i$. This gives an easy and computationally inexpensive way of approximating the resolvent associated to $\mu$.

\smallskip
Note however that this program was only fully developed in the one-dimensional case since the multi-dimensional analogue of the surrogate in terms of the coefficients is not known. In the next section we address, in the multi-dimensional context, the simpler problem of variables density functions.


\begin{remark}
{\rm
It is worthwhile mentioning that the program we carried out for the conductivity coefficient in the one dimensional case can be extended to the problem of identifying the density coefficient from the corresponding resolvent in an arbitrary dimension.

\smallskip
For sake of simplicity we assume in this remark that $\Omega$ is $C^{1,1}$-smooth.

\smallskip
Consider, for $f\in L^2(\Omega )$ and $\rho \in L^\infty (\Omega )$, the problem of finding $u\in H_0^1(\Omega )$ so that 
\[
-\Delta u =\rho f\;\; \mbox{in}\; \Omega .
\]
The corresponding variational formulation consists in seeking $u\in H_0^1(\Omega )$ satisfying
\begin{equation}\label{rem1}
\int_\Omega \nabla u\cdot \nabla vdx = \int_\Omega \rho f vdx ,\;\; v\in H_0^1(\Omega ).
\end{equation}
By Lax-Milgram lemma this problem has a unique solution $u_\rho :=R_\rho f$.

\smallskip
As $\Omega$ is $C^{1,1}$-smooth, $R_\rho$ maps $L^2(\Omega )$ into $\mathcal{H}=H_0^1(\Omega )\cap H^2(\Omega )$. When $\rho \equiv 1$, we denote $R_\rho$ by $R$. Then $R$ is nothing but the inverse of the bounded operator $A: \mathcal{H}\rightarrow L^2(\Omega )$ given by $Au=-\Delta u$.

 \smallskip
 Using once again Lebesgue's differentiation theorem, we can prove that $M_\rho$, the multiplication operator by $\rho \in L^\infty (\Omega)$, acting as an operator on $L^2(\Omega )$, satisfies $\|M_\rho \|:=\|M_\rho\|_{\mathscr{B}(L^2(\Omega ))}=\|\rho\|_{L^\infty (\Omega )}$.
 
\smallskip
Since $R_\rho =RM_\rho$ or equivalently $M_\rho = AR_\rho$, we derive
\[
\|R\|^{-1} \|R_\rho \|\le \| \rho \|_{L^\infty (\Omega )}\le \|A\| \|R_\rho\|.
\]

Fix $\rho_1,\ldots ,\rho_N\in L^\infty (\Omega )$, and let $\rho \in V_N=\mbox{span}\{\rho_1,\ldots \rho_N\}$ and $\widetilde{\rho}\in L^\infty (\Omega )$. In light of the linearity of the mapping $\rho \rightarrow R_\rho$ we get
\[
\|R\|^{-1} \|R_\rho -R_{\widetilde{\rho}} \|\le \| \rho -\widetilde{\rho}\|_{L^\infty (\Omega )}\le \|A\| \|R_\rho-R_{\widetilde{\rho}}\|.
\]
Accordingly 
\[
\mathbf{d}(R_{\widetilde{\rho}}, \mathbf{R}_N)=\mbox{dist}_{L^\infty (\Omega )}(\widetilde{\rho} ,V_N),
\]
where $\mathbf{R}_N=\mbox{span}\{R_{\rho_1},\ldots ,R_{\rho_N}\}$
and
 \[
\mathbf{d}(R_\rho , R_{\widetilde{\rho}})=\| \rho -\widetilde{\rho}\|_{L^\infty (\Omega )}.
\]
 yields an appropriate surrogate between resolvents.
 
 \smallskip
 This allows the full application of the weak greedy algorithm described in the previous section in this case of multi-dimensional density dependent elliptic equations.
 }
 \end{remark}

\section{Extensions and further comments.}

The results of this paper constitute a first contribution on a topic that is rich in open interesting problems.

We mention here some of them:
\begin{itemize}

\item {\it Surrogates in the multi-dimensional case}. In Subsection \ref{surrogate} we have found surrogates for the elliptic problem with variable diffusivity  in dimension $n=1$. This problem is totally open in the multi-dimensional case.
The case of the variable density was solved in the previous section.

\item {\it Elliptic matrices.} In dimensions $n \ge 2$ the same problems can be formulated in the context of elliptic problems involving coefficients $\sigma_{ij}(x, \mu)$, i. e. to equations of the form
\[
-\sum_j \partial_j(\sigma_{ij}(x, \mu) \partial_i u)=f.
\]
Of course, the problem is much more complex in this case since there is no a sole coefficient $\sigma$ to be identified but rather all the family $\sigma_{ij}$ with $i, j=1,...,N$.

\item {\it Elliptic systems.}  The same problems arise also in the context of elliptic systems such as, for instance, the system of elasticity.

\item {\it Evolution equations.} The problems addressed in the present paper make also sense for evolution problems and, in particular, parabolic, hyperbolic and Schr\"odinger equations.

Let us consider for instance the heat equation:
\begin{equation}\label{heat}
\left\{\begin{aligned}
u_t-\mbox{div}(\sigma \nabla u)=0\; \mbox{in}\; \Omega \times (0, \infty) \\
 u=0\; \mbox{on}\;\Gamma \times (0, \infty)\\
 u(x, 0)= f(x) \; \mbox{in}\;\Omega.
 \end{aligned}\right.
\end{equation}

The same questions we have addressed in the elliptic context arise also in the parabolic one. In this case, the question can be formulated as follows: {\it Does the resolvent $f \in L^2(\Omega) \to C([0, \infty); L^2(\Omega))$ determine the diffusivity coefficient in an unique manner? Is the map from resolvent to diffusion coefficient Lipschitz in suitable norms?}

The way the question has been formulated is easy to solve. In fact it is sufficient to observe that the elliptic equation is subordinated to the parabolic one, so that, the time integral of the parabolic solution, namely,
$$
v(x)=\int_0^\infty u(x, t) dt,
$$
solves the elliptic equation
\begin{equation}\label{ellip-subo}
\left\{\begin{aligned}
-\mbox{div}(\sigma \nabla v)=f\; \mbox{in}\; \Omega \\
 v=0\; \mbox{on}\;\Gamma.
  \end{aligned}\right.
\end{equation}
This can be easily seen integrating the parabolic equation in time and using the fact that the solutions of the heat equation tend to zero as $t \to \infty$.

Therefore, a full knowledge of the parabolic resolvent yields, in particular, fully, the elliptic resolvent as well. According to the previous results in this paper the diffusivity coefficient can be determined, and the dependence  is Lipschitz. Our comments on the explicit representation of solutions in $1-d$ and their possible use for the development of greedy algorithms apply as well.

This simple observation however raises many other interesting problems: Can the same Lipschitz identification result be achieved if the parabolic solution is only known in a finite time interval $[0, T]$? What about diffusivity coefficients depending also on time $\sigma=\sigma(x, t)$? What happens when, rather than the solution for the initial value problem, one considers those with non-homogeneous source terms?

Similar questions arise for wave-like equations and the same answer can be expected if the models under consideration are dissipative. This allows to integrate the equations under consideration in the time-interval $(0, \infty)$ and employing the decay of solutions as $t\to \infty$. Of course, the expected behaviour is completely different in the absence of damping.

\item {\it Control problems.} Greedy and weak greedy methods have been implemented in the context of controllability of finite and infinite-dimensional Ordinary Differential Equations (ODEs) in \cite{Lazar}. But this has been done for fixed specific data to be controlled. It would be interesting to analyze whether the results of this paper can be extended to these controllability problems so to achieve approximations, independent of the data to be controlled. For this to be done one would need to adapt the Lipschitz stability estimate in (\ref{Lest}) to control problems, getting upper bounds on the distance between the generators of the semigroups in terms of the distance between the corresponding control maps. This is an issue that needs significant further investigation.
\end{itemize}

\medskip
\textbf{Acknowledgement.} We would like to think Giovanni Alessandrini (Trieste), Albert Cohen (Paris) and Martin Lazar (Dubrovnik) for their valuable comments.



\begin{thebibliography}{99}



 \bibitem{ADFV} {\sc G. Alessandrini, M. Di Cristo, E. Francini} and {\sc S. Vessella},
{\em Stability for quantitative photoacoustic tomography with well chosen illuminations},
 arXiv:1505.03657.


\bibitem{BCDDPW} {\sc  P.\,Binev, A.\,Cohen, W.\,Dahmen, R.\,DeVore, G.\,Petrova and  P.\,Wojtaszczyk},  {\em Convergence
rates for greedy algorithms in reduced basis methods}, SIAM J. Math. Anal. 43 (3) (2011), 1457-1472.


\bibitem{Buffa12} 
  {\sc A.\,Buffa, Y.\,Maday, A.T.\,Patera, C.\,Prud¡¦homme and  G.\,Turinici},
{\em A Priori convergence of the greedy algorithm for the parameterized reduced basis},
  Math. Model. Numer. Anal. { 46} (2012),  595-603.
  
    \bibitem{cohen} {\sc A. \, Cohen,} private communication.

\bibitem{CDV}   {\sc A.\,Cohen and  R.\,DeVore},  {\em Kolmogorov widths under holomorphic mappings},   IMA J. Numer. Anal. { 36} (1) (2016), 1Ð12. 

  \bibitem{CD15} 
  {\sc A.\,Cohen and  R.\,DeVore}, 
  {\em Approximation of high-dimensional parametric PDEs},  Acta Numerica { 24} (2015), 1-159. 

  

\bibitem{D15}   {\sc W.\,Dahmen}, {\em How to best sample a solution manifold ?},  in ``Sampling theory, a renaissance", 403Ð435, Appl. Numer. Harmon. Anal., Birkh\"auser/Springer, Cham, 2015. 



\bibitem{DV15}
  {\sc R.\,DeVore}, (2015).
  {\em The theoretical foundation of reduced basis methods}, preprint.


\bibitem{DPW13} {\sc R.\,DeVore,  G.\,Petrova and  P.\,Wojtaszczyk}, 
{\it Greedy algorithms for reduced bases in Banach spaces}. 
Constr. Approx. { 37} (2013), 455-466.



\bibitem{Lazar} {\sc M.\,Lazar  and  E.\,Zuazua}, 
{\em Greedy  controllability of finite dimensional linear systems},  Automatica, to appear, arXiv:1601.02817.



\end{thebibliography}
\end{document}